\newtheorem{thm}{{\sc Theorem}}[section]
\newtheorem{prop}[thm]{{\sc Proposition}}
\newtheorem{lem}[thm]{{\sc Lemma}}
\newtheorem{cor}[thm]{{\sc Corollary}}
\newtheorem{claim}[thm]{{\sc Claim}}
\newtheorem{remark}[thm]{{\sc Remark}}
\theoremstyle{definition}
\newcommand{\Deform}[0]{{\operatorname{Def}}}
\title{On deformations of Lagrangian fibrations}
\author{Daisuke Matsushita}
\subjclass[2000]{Primary 14J40, Secondary 14D06}
\address{Division of Mathematics, Graduate School of Science,
         Hokkaido University,  Sapporo, 060-0810 Japan}
\thanks{* Partially supported by Grant-in-Aid \# 15740002
 (Japan Society for Promotion of Sciences).} 
\email{matusita@math.sci.hokudai.ac.jp}
\begin{document}

\begin{abstract}
 Let $X$ be an irreducible symplectic manifold
 and $\Deform (X)$ the Kuranishi family.
 Assume that $X$ admits a Lagrangian fibration.
 We prove that $X$ can be deformed preserving
 a Lagrangian fibration. More precisely,
 there exists a smooth hypersurface $H$
 of $\Deform (X)$ such that
 the restriction family $\mathcal{X}\times_{\Deform (X)}H$
 admits a family of
 Lagrangian fibrations over $H$.
\end{abstract}

\maketitle

\section{Introduction}
 A compact K\"ahler manifold $X$ is said to be
 {\em symplectic\/} if $X$ carries a holomorphic symplectic form.
 Moreover $X$ is said to be
 {\em irreducible symplectic\/} if $X$ satisfies the following
 two properties:
\begin{enumerate}
 \item  $\dim H^0 (X, \Omega_X^2) = 1$ and;
 \item  $\pi_1 (X) = \{1\}$.
\end{enumerate}
 A surjective morphism between K\"ahler spaces
 is said to be {\em fibration} if 
 it is surjective and 
 has only connected fibres.
 A fibration from a symplectic manifold
 is said to be {\em Lagrangian\/} 
 if a general fibre is a Lagrangian submanifold.
 An example of an irreducible symplectic
 is a $K3$ surface. An elliptic fibration from a $K3$ surface
 gives an example of a Lagrangian fibration.
 It is expected that
 a $K3$ surface and an irreducible symplectic manifold
 share many geometric properties.
 Let $S$ be a $K3$ surface and $g : S \to \mathbb{P}^1$
 an elliptic fibration. 
 Kodaira proves that
 there exists a smooth hypersurface $H_S$ in 
 the Kuranishi space $\Deform (S)$ of $S$
 which has the following three properties:
\begin{enumerate}
 \item The hypersurface $H_S$ passes the reference point.
 \item For  the Kuranishi family $\mathcal{S}$ of $S$,
       there exists an open neighbourhood $ U_{S} $ of the
       reference point in $ H_{S} $ such that
       the base change $\mathcal{S}\times_{\Deform (S)}U_S$
       admits a surjective morphism 
       $\mathcal{S}\times_{\Deform (S)}U_{S} \to \mathbb{P}^1_{U_S}$ which satisfies
       the following diagram:
$$
 \xymatrix{
 \mathcal{S}\times_{\Deform (S)}U_S \ar[r] \ar[d]
 & \mathbb{P}^1_{U_S} \ar[dl] \\
 U_S. & 
 }
$$
 \item The original fibration $g$ coincides with
       the restriction of the above diagram over the reference
       point. For every point $t$ of $U_S$,
       the restriction of  the diagram over $t$
       gives an elliptic fibration from $\mathcal{S}_t$, which is the
       fibre over $t$.
\end{enumerate}
 In this note, we prove a higher dimensional analog of 
 the above statement. To state the main result, we need a description
 of the local universal deformation of a pair of
 an irreducible symplectic manifold and a line bundle
 by
 Beauville \cite[Th\'eor\`em 5 and Corollaire 1]{beauville}.
\begin{thm}\label{local_torelli}
%[\cite{Th\'eor\`em 5 (b)}{beauville}]
  Let $X$ be an irreducible symplectic manifold,
  $L$ a line bundle on $X$ and $\mathrm{Def} (X)$
 the Kuranishi space of $X$. We denote by $q_X$ the Beauville-Bogomolov-Fujiki
 quadratic form on $H^2(X,\mathbb{C})$. Let $Q$ be the open set of
 the quadratic hypersurface in 
 $\mathbb{P}(H^2(X,\mathbb{C}))$ defined by
$$
 Q := \{
 \alpha  \in H^2 (X,\mathbb{C}) ;
 q_X(\alpha) = 0, q_X(\alpha + \bar{\alpha})> 0
 \}.
$$
 Then the image of the period map 
$$
 p : \mathrm{Def}(X) \to \mathbb{P}(H^2(X,\mathbb{C})).
$$
 is contained in $Q$ and
 locally isomorphic in a neighbourhood of the reference
 point of $\mathrm{Def}(X)$. Let $H$ be the preimage of the
 intersection of $\mathbb{P}(L^{\perp})$ and $Q$ by $p$,
 where $L^{\perp}$ is the orthogonal space of $L$ in 
 $H^2(X,\mathbb{C})$ with respect to $q_X$. There
 exists a line bundle $\mathcal{L}$ on $\mathcal{X}\times_{\Deform(X)}H$
 such that $\mathcal{L}|_X =L$. The pair
 $(\mathcal{X}\times_{\Deform(X)}H, \mathcal{L})$ forms the local
 universal deformation family of the pair $(X,L)$.
\end{thm}
 The following is the main theorem.
\begin{thm}\label{main}
 Let $X$ be an irreducible holomorphic symplectic manifold
 and $\mathcal{X} \to \Deform (X)$ the Kuranishi family of $X$.
 Assume that $X$ admits a Lagrangian fibration
 $f : X \to B$ over a projective variety $B$.
 Let $L$ be the pull back of
 an ample line bundle  on $B$.
 We also let  $\mathcal{L}$ and $H$ be
 as in Theorem \ref{local_torelli}. Then
$
 R^i\pi_*\mathcal{L}
$
 is locally free for every $i$ on an open neighbourhood of 
 the reference point in $ H $,
  where $\pi$ is the projection
 $\mathcal{X}\times_{\Deform(X)}H \to H$
\end{thm}

\begin{cor}\label{commentary}
       Let $f : X \to B$ be as in Theorem \ref{main}.
       We also let
       $L$ be the pull back of a very ample line buncle
       of $B$. The symbols 
       $\pi$, $\mathcal{X}$, $H$ and $\mathcal{L}$ denote
       same objects
       as in Theorem \ref{main}.
       Then $\mathcal{L}$ is $\pi$-free
       in a small neighbourhood of the reference point, that is,
       there exists an open neighbourhood $ U $ of the reference point
       in $ H $
       and a morphism 
       $f_U : \mathcal{X}\times_{\Deform (X)}U  \to
       \mathbb{P}(\pi_* \mathcal{L})|_{U}$. Together with 
       $\pi$, they form
       the following diagram:
$$
 \xymatrix{
 \mathcal{X}\times_{\Deform (X)}U \ar[r]^{f_U} \ar[d]_{\pi}
 & \mathbb{P}({\pi_{*}\mathcal{L}})|_{U} \ar[dl] \\
 U, & 
 }
$$
       The original fibration $f$ coincides with
       the restriction of the above diagram over the reference
       point. For every point $t$ of $U$,
       the restriction of 
       the diagram over $t$
       gives a Lagrangian fibration from $\mathcal{X}_t$, which is the fibre over $t$.
%       Moreover, the morphism
%       from the image of $f_H$ to $H$ is flat.
\end{cor}

\begin{remark}
 Hassett and Tschinkel obtained 
 Corollary \ref{commentary} in 
 \cite[Theorem 4.4]{hassett} under the assumptions that
 $X$ is deformation equivalent to an irreducible component of the
 Hilbert scheme of a $K3$ surface which represents
 length two subschemes
 and the higher cohomologies of
 $L$ vanishes.
\end{remark}

\begin{remark}
 If $X$ is an irreducible symplectic manifold.
 Assume that $X$ admits a surjective morphism 
 $f : X \to B$ such that $f$ has connected fibres and
 $0 < \dim B < \dim X$.
 If $X$ and $B$ are projective or $X$ and $B$ are smooth and K\"ahler,
 then $f$ is Lagrangian and $B$ is projective by 
 \cite{matsu}, \cite{matsu2} and \cite[Proposition 24.8]{GJK}.
\end{remark}

\section*{Acknowledgement}
 The author would like to express his thanks to Professors
 A.~Fujiki, E.~Markman,
 O.~Debarre, 
 Jun-Muk. ~Hwang, N.~Nakayama
 and K.~Oguiso
 for their comments.

\section{Proof of Theorem}
 Before proving Theorem \ref{main}, 
 we prepare two Propositions. 
% One of which asserts that the image of
% some irreducible component of the relative Douady space
% of the morphsim $\mathcal{X} \to \Deform (X)$ in $\Deform (X)$
% coincides with $H$.
% The other one asserts the higher direct images of $\mathcal{L}$
% are locally free over an unit disk of $H$.

\begin{prop}\label{first_step}
 Let $f : X \to B$, $\mathcal{X} \to \Deform (X)$, $ H $ 
 and $L$ be as in Theorem \ref{main}.
 We denote by $A$ a general fibre of $f$.
 Then there exists 
 a smooth torus fibration $ \mathcal{A} \to H $ which satisfy the following 
 diagram:
 \[
 \xymatrix{
 \mathcal{X} \ar[d] &  \mathcal{A} \ar[d]^{\phi} \ar[l]_{ev} \\
 \Deform (X) & H \ar[l]^{j}
 }
 \]
 where $ j $ is the natural inclusion. For each point $ u $ of $ H $,
  $ ev(\phi^{-1}(u)) $ defines a Lagrangian torus in $ \mathcal{X}_{j(u)} $.
 
\end{prop}

\begin{proof}[Proof of Proposition \ref{first_step}.]
 We need the following Lemma.
\begin{lem}\label{key_lemma}
 Let $X$, $L$ and $A$ be as in Proposition \ref{first_step}.
  For an element $z$ of $H^{2}(X, \mathbb{C}),$
 the restriction 
 $z |_{A} = 0$ in $H^{2}(A,\mathbb{C})$
 if $q_X(z , L) = 0$.
\end{lem}
\begin{proof}
 Let $\sigma$ be a K\"ahler class of $X$. It is enough to prove
 that
$$
  z \sigma^{n-1} L^n = z^2 \sigma^{n-2} L^{n} = 0,
$$
 where $2n = \dim X$.
 By \cite[Theorem 4.7]{fujiki}, we have the following equation;
\begin{equation}\label{Fujiki_relation}
 c_X q_X (z + s\sigma + tL)^n = 
      (z + s\sigma + tL)^{2n},
\end{equation}
 where $s$ and $t$ are indeterminable numbers and  
 $c_X$ is a constant only depending on $X$.
 By the assumption,
$$
 c_X q_X(z + s\sigma + tL)^n =
 c_X (q_X(z) + s^2q_X(\sigma) + 2sq_X(z, \sigma)
 + 2stq_X(\sigma, L))^n .
$$
 If we compare the $s^{n-1}t^n$ 
 and $s^{n-2}t^n$ terms of 
 the both hand sides of the equation (\ref{Fujiki_relation}),
 we obtain the assertions.
\end{proof}
 We go back to the proof of the assertion
 of Proposition \ref{first_step}.
 Let $j : H^2(X,\mathbb{C}) \to H^2(A,\mathbb{C})$ 
 be the natural induced morphism by the inclusion
 $A \to X$.
 We also  let $H_A$ be 
 the preimage of 
$$
  Q \cap \mathbb{P}({\mathrm{Ker}}(j)),
$$
 by the period map of $ p $.
 By \cite[0.1 Theorem]{voisin},
 there exists a smooth torus fibration $ \mathcal{A} \to H_{A} $ which
 satisfies the following diagram:
 \[
 \xymatrix{
 \mathcal{X} \ar[d] & \mathcal{A} \ar[l]_{ev} \ar[d]^{\phi} \\
 \Deform (X) & H_{A} \ar[l]^{j}.
 }
 \]
 where $ j $ is the natural inclusion. For each point $ u $ of $ H_{A} $,
 $ ev (\phi^{-1}(u)) $ defines a Lagrangian torus in $ \mathcal{X}_{j(u)} $.
 By Lemma \ref{key_lemma}, 
 $L^{\perp} \subset {\mathrm{Ker}}(j)$. Since
 the restriction of K\"ahler classes of $X$ defines
 K\"ahler classes of $A$,
 we obtain the following
 inequalities:
$$
 \dim L^{\perp} \le \dim {\mathrm{Ker}}(j) \le \dim H^2 (X,\mathbb{C}) - 1.
$$
 Hence we obtain
 ${\mathrm{Ker}}(j) = L^{\perp}$. This implies that $ H_{A} = H $.
\end{proof}

\begin{prop}\label{one_dimensional_cutting}
 Let $\mathcal{X}$, $\Deform (X)$, $\mathcal{L}$
  and $H$ be as in Theorem \ref{main}.
 We also let $\Delta$ be a unit disk in $H$ which
 passes the reference point of $\Deform(X)$.
 The symbols
 $\mathcal{X}_{\Delta}$,  $\pi_{\Delta}$
 and $\mathcal{L}_{\Delta}$ denote
 the base change $\mathcal{X}\times_H \Delta$, 
 the induced morphism $\mathcal{X}_{\Delta} \to \Delta$
 and the restriction $\mathcal{L}$ to $\mathcal{X}_{\Delta}$,
 respectively.
 Assume that the Picard number of 
 the fibre $\mathcal{X}_t$ of $\pi_{\Delta}$
 over $t$ is one for 
 a very general point $t$ of $\Delta$. 
 Then
$$
 R^i (\pi_{\Delta})_* \mathcal{L}_{\Delta}
$$
 are locally free for all $i$ at the reference point.
\end{prop}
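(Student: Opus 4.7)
The plan is to show that $t \mapsto h^i(\mathcal{X}_t, \mathcal{L}_t)$ is locally constant at $0 \in \Delta$ for every $i$. By Grauert's base-change theorem this is equivalent to local freeness of $R^i(\pi_\Delta)_*\mathcal{L}_\Delta$ at the reference point, and since $\Delta$ is smooth of dimension one, local freeness is equivalent to torsion-freeness of the coherent sheaf at $0$. So the goal is to rule out any upward jump of $h^i$ at $0$.

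Two general inputs will carry most of the weight. First, the Euler characteristic $\chi(\mathcal{X}_t, \mathcal{L}_t)$ is a topological invariant by Hirzebruch-Riemann-Roch, hence constant on $\Delta$; via Fujiki's relation and the fact that $q_X(\mathcal{L}_t) = q_X(L) = 0$ for all $t \in \Delta$ (the hypersurface $H$ was cut out by $L^\perp$ in Proposition~\ref{first_step}), this constant value is just $\chi(\mathcal{O}_X)$. Second, by Grauert's upper semi-continuity theorem, each $h^i(\mathcal{X}_t, \mathcal{L}_t)$ is upper semi-continuous in $t$, giving $h^i(\mathcal{X}_t, \mathcal{L}_t) \le h^i(X, L)$ in a neighbourhood of $0$.

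To identify $h^i(X, L)$ I would use the Leray spectral sequence for $f : X \to B$, the projection formula, and the identification $R^q f_* \mathcal{O}_X \cong \Omega_B^q$ available for a smooth Lagrangian base, yielding
\[
H^i(X, L) \cong \bigoplus_{p+q = i} H^p(B, A \otimes \Omega_B^q).
\]
The next step is to show that $h^i(\mathcal{X}_t, \mathcal{L}_t) = h^i(X, L)$ at a very general $t \in \Delta$ (where $\rho(\mathcal{X}_t) = 1$). Once such equalities are established on a Zariski-dense subset of $\Delta$, upper semi-continuity gives $\le$ and the constancy of $\chi$ prevents any combination of strict inequalities, forcing equality throughout a neighbourhood of $0$ and hence the desired local freeness.

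The main obstacle is this last identification at a very general $t$. There the line bundle $\mathcal{L}_t$ generates $\PIC(\mathcal{X}_t)$ modulo torsion and satisfies $q_X(\mathcal{L}_t) = 0$, but it is not a priori the pull-back of an ample line bundle under any morphism. To bring $\mathcal{L}_t$ onto the same footing as $L$, I expect to use the relative Douady component $D(\mathcal{X}/\Deform (X))_A$ from Proposition~\ref{first_step}, whose image is all of $H$ and hence produces, for each $t \in \Delta$, a proper deformation $A_t \subset \mathcal{X}_t$ of the original Lagrangian fibre. The numerical invariants of $A_t$ with respect to $\mathcal{L}_t$ (computed via Fujiki's relation, as in Lemma~\ref{key_lemma}) match those of $A$ with respect to $L$, which should let the Leray-type computation above be rerun fibrewise on $\mathcal{X}_t$ and yield the required cohomology equalities.
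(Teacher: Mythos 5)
There is a genuine gap, and it sits exactly where you locate "the main obstacle": the identification $h^i(\mathcal{X}_t,\mathcal{L}_t)=h^i(X,L)$ at a very general $t$. Your plan for this step is to produce, via the Douady component $D(\mathcal{X}/\Deform (X))_A$, a single Lagrangian torus $A_t\subset\mathcal{X}_t$ and then "rerun the Leray-type computation fibrewise". But a Leray computation requires a fibration $f_t:\mathcal{X}_t\to B_t$ with $\mathcal{L}_t$ pulled back from the base, and no such fibration exists a priori on $\mathcal{X}_t$; constructing one (or the weaker substitute, semi-ampleness of $\mathcal{L}_t$) from the deformations of $A_t$ is precisely the hard content of the paper's proof. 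One must show that $\mathcal{X}_t$ is non-projective and $\mathcal{L}_t$ is nef, assemble the deformations of $A_t$ into an almost-holomorphic map $\Phi:\mathcal{X}_t\dasharrow B_t$ with compact general fibres via Campana's theorem (which requires controlling the evaluation map $p$, its branch locus, and the non-triviality of $\Phi$), prove the base is projective, match $\mathcal{L}_t$ with the pull-back of an ample class using the Beauville--Bogomolov--Fujiki form, and finally invoke Nakayama's criteria for semi-ampleness and for local freeness of the $R^i(\pi_\Delta)_*$. None of this is supplied by the statement that the numerical invariants of $A_t$ match those of $A$. Moreover, even granting a fibration on $\mathcal{X}_t$, the input $R^qf_*\mathcal{O}\cong\Omega^q_B$ and the identification of $B_t$ are themselves nontrivial theorems not available in this setting, which is why the paper avoids computing individual $h^i$ altogether and instead deduces local freeness directly from semi-ampleness on nearby fibres.

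A second, smaller but real, error: the bookkeeping "upper semi-continuity gives $\le$ and the constancy of $\chi$ prevents any combination of strict inequalities" is false. Since $\chi=\sum(-1)^ih^i$ is an alternating sum, simultaneous jumps in consecutive $h^i$ (say $h^0$ and $h^1$ both increasing by one at $t=o$) leave $\chi$ unchanged, so constancy of $\chi$ together with semicontinuity does not force each $h^i$ to be constant. Your argument can be repaired only by proving $h^i(\mathcal{X}_t,\mathcal{L}_t)=h^i(X,L)$ for every $i$ separately at very general $t$ (after which a shrinking argument on the one-dimensional base does give constancy near $o$), which brings you back to the unproved identification above.
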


\begin{proof}
 For a point $u$ of $\Delta$, 
 $\mathcal{X}_u$ denote 
 the fibre of $\pi_{\Delta}$ over $u$
 and $\mathcal{L}_u$ denote
 the restriction
 of $\mathcal{L}_{\Delta}$ to $\mathcal{X}_u$.
 We also denote by $o$ the reference point.
 If there exists an open neighbourhood $V$ of $o$ in \( \Delta\) such that
 $\mathcal{L}_u$ is semi-ample for every point $u$ of $V$ except $o$,
 the assertion
 of Proposition \ref{one_dimensional_cutting}
 follows from \cite[Corollary 3.14]{nakayama}.
 We prove it by the following Lemma \ref{very_general} and Lemma 
 \ref{very_general=>open}.
\begin{lem}\label{very_general}
 We use the same notation as in Proposition \ref{one_dimensional_cutting}.
 Let $\mathcal{X}_u$ be a fibre of $\pi_{\Delta}$ whose Picard number 
 is one. Then
 $\mathcal{L}_u$ is semi-ample.
\end{lem}
 The proof of Lemma \ref{very_general} will be given after the following
 two Claims.
\begin{claim}\label{nonprojective}
  Under the hypothesis of Lemma \ref{very_general},
 \begin{enumerate}
  \item $\mathcal{X}_u$ is not projective and;
  \item $\mathcal{L}_u$ is nef.
 \end{enumerate}
\end{claim}

\begin{proof}
 (1) We denote by $q_{\mathcal{X}_u}$ the Beauville-Bogomolov-Fujiki
 quadratic 
 form on $\mathcal{X}_u$.
 Since $(\mathcal{X}_u , \mathcal{L}_u)$ and $(X,L)$
 are deformation equivalent, 
 $q_X (L) = q_{\mathcal{X}_u}(\mathcal{L}_u) = 0$.
 By the assumption, the Picard number of $\mathcal{X}_u$ is one.
 Hence
 $q_{\mathcal{X}_u}(z) = 0$ for every element
 of $H^{1,1} (\mathcal{X}_u,\mathbb{C})_{\mathbb{Q}}$.
 By \cite[Corollary 3.8]{basic_result},
 $\mathcal{X}_u$ is not projective.
\newline
 (2)  This follows from  \cite[3.4 Theorem]{oguiso}. For the convenience of readers,
 we copy their arguments.
 By \cite[Proposition 3.2]{kahler_cone}
 it is enough to prove that 
 $\mathcal{L}_u. C \ge 0$ for every effective curve of $\mathcal{X}_u$.
 Since $q_{\mathcal{X}_u}$ 
 is non-degenerate
 and defined over $H^2 (\mathcal{X}_u, \mathbb{Q})$,
 there exists an isomorphic 
$$
 \iota : H^{1,1}(\mathcal{X}_u,\mathbb{C})_{\mathbb{Q}} \to
        H^{2n-1,2n-1}(\mathcal{X}_u,\mathbb{C})_{\mathbb{Q}}
$$
 such that 
$$
 q_{\mathcal{X}_u}(\mathcal{L}_u , \iota^{-1}([C])) = \mathcal{L}_u.C.
$$
 If $q_{\mathcal{X}_u}(\mathcal{L}_u, z) \ne 0$ for an element $z$ of 
 $H^{1,1}(\mathcal{X}_u,\mathbb{C})_{\mathbb{Q}}$,
 then there exists a rational number $d$ such that
 $q_{\mathcal{X}_u}(\mathcal{L}_u + dz) > 0$.
 By \cite[Corollary 3.9]{basic_result},
 this implies that $\mathcal{X}_u$ is projective.
 That is a contradiction. Thus
 $\mathcal{L}_u .C = 0$ for every curve $C$.
\end{proof}

\begin{claim}\label{almost_holomorphic}
 Under the hypothesis Lemma \ref{very_general}, there exists a 
 dominant meromorphic map $\Phi : \mathcal{X}_u \dasharrow B_u$ such that
  a general fibre of $\Phi$ is compact, $B_u$ is a K\"ahler manifold and
 $\dim B_u > 0$.
\end{claim}
\begin{proof}
 We use the notation as in Proposition \ref{first_step}.
 Since $ u \in H $,
 $ \mathfrak{X}_{u} $ contains a Lagrangian torus $ \mathcal{A}_{u} $
 by Proposition \ref{first_step}.
 Let $D(\mathcal{X}_u)$ 
 be the irreducible component of the Douady space of $ \mathcal{X}_{u} $
 which
 contains a point corresponding to $\mathcal{A}_{u}$
 and $D(\mathcal{X}_u)^{-}$ 
 a resolution of $D(\mathcal{X}_u)$.
 We note that the Douady space of $ \mathcal{X}_{u} $ is smooth at the point
 corresponding to $ \mathcal{A}_{u} $ by \cite[Theorem 2.2]{ran_deformation_map}.
 We
 denote by $U(\mathcal{X}_u)^{-}$ the normalization
 of $U(\mathcal{X}_u)\times_{D(\mathcal{X}_u)}D(\mathcal{X}_u)^{-}$,
 where $U(\mathcal{X}_u)$ is
 the universal family
 over $D(\mathcal{X}_u)$. We also denote 
 by $p_1$ and $p_2$ the natural projections 
 $U(\mathcal{X}_u)^{-} \to \mathcal{X}_u$ and
 $U(\mathcal{X}_u)^{-} \to D(\mathcal{X}_u)^{-}$.
 The relations of these objects are summarized  in the following
 diagram:
$$
 \xymatrix{
 \mathcal{X}_u & U(\mathcal{X}_u)     \ar[l] \ar[d] 
               & U(\mathcal{X}_u)^{-} \ar[l] \ar[d]^{p_2} 
                 \ar@/_1pc/[ll]_{p_1}  \\
               & D(\mathcal{X}_u)
               & D(\mathcal{X}_u)^{-} \ar[l].
 }
$$
 Let $a$ be a point of $\mathcal{X}_u$.
 We define the subvarieties $G_i (a)$ of $\mathcal{X}_u$ by
\begin{eqnarray*}
 && G_0 (a):= a   \\
 && G_{i+1} (a) := p_1(p_2^{-1}(p_2(p_1^{-1}(G_{i}(a)))))
\end{eqnarray*}
 We also define 
$$
 G_{\infty}(a) := \bigcup_{i = 0}^{\infty} G_i (a).
$$
 Let $B(\mathcal{X}_u)$ be the Barlet space of $\mathcal{X}_u$.
 By \cite[Th\'eor\`eme A.3]{campana}, $G_{\infty}(a)$ is 
 compact for a general point $a$ of $\mathcal{X}_u$.
 We define a meromorphic map by
 \[
 \Phi : \mathcal{X}_u \ni a \mapsto G_{\infty}(a) \in  B(\mathcal{X}_u).
 \]
 By \cite[(5.2) Theorem]{fujiki}, $B(\mathcal{X}_u)$ is of class $\mathcal{C}$.
 Hence there exists an embedded resolution 
 $B(\mathcal{X}_u)^{\sim} \to B(\mathcal{X}_u)$
 of the closure of the image of $\Phi$
 whose proper transformation is smooth and K\"ahler.
 We denote by $B_u$ the proper transformation. Then we obtain a meromorphic map
$$
 \mathcal{X}_u \dasharrow B_u ,
$$
 which is the desired one if
 $B_{u}$ is not a point.
 Hence we show that $G_{\infty}(a)$ is not equal to $\mathcal{X}_u$
 for a general point $a$ of $\mathcal{X}_u$.
 Let $F$ be a general fiber of $p_2$.
 Since $p_1$ is the natural projection
 from the base change of the universal family
 $U(\mathcal{X}_u)\times_{D(\mathcal{X}_u)}D(\mathcal{X}_u)^-$
 to $\mathcal{X}_u$,
 $p_1(F)$ defines a Lagrangian torus of
 $\mathcal{X}_u$. By \cite[Theorem
 2.2]{ran_deformation_map} $p_1(F)$ is unobstructed.
 The normal bundle of $p_1 (F)$
 is isomorphic to the tangent bundle of $F$,
 Hence
 $p_1$ is locally isomorphic in a neighbourhood of $p_1 (F)$ and 
 generically finite.
 If $p_1$ is bimeromorphic, then $G_{\infty}(a) = G_1 (a)$ and we are
 done.
 If $p_1$ is not bimeromorphic, we consider the branch locus of 
 the Stein factorization of $p_1$.
 If the branch locus is empty, then $U(\mathcal{X}_u)^{-}$
 is not irreducible, because $\mathcal{X}_u$ is simply connected.
 That is a contradiction. Thus the branch locus is non empty.
 Since $\mathcal{X}_u$ is smooth, the branch locus
 defines 
 an effective divisor $E$ of $\mathcal{X}_u$.
 If we prove that there exists an effective $\mathbb{Q}$-divisor
 $E'$ such that
$$
 p_1^*E = p_2^*E'
$$
 then $G_{\infty} (a) \cap E = \emptyset$ for a point $a \not\in E$
 and we are done. 
 By \cite[Lemma 2.15]{nakayama}, we need to show that
 $E$ is nef and $p_2(p_1^{-1}(E)) \ne D(\mathcal{X}_u)^{-}$.
 Since the Picard number of $\mathcal{X}_u$ is one, 
 $\mathcal{L}_u$ and $\pm E$ should be numerically proportional.
 By Claim \ref{nonprojective}, 
 $E$ is nef, because
 $E$ is effective. 
 Since $p_1$ is isomorphic in a neighbourhood of $F$,
 $E \cap F = \emptyset$. This implies that 
 $p_2(p_1^{-1}(E)) \ne D(\mathcal{X}_u)^{-}$.
\end{proof}

\begin{proof}[Proof of Lemma \ref{very_general}]
 By Claim \ref{almost_holomorphic}, there
 exists a dominant meromorphic map $\Phi : \mathcal{X}_u \dasharrow B_u$
 whose general fibre is compact.
 By blowing ups and flattening, we have the following diagram:
$$
 \xymatrix{
 \mathcal{X}_u \ar@{.>}[d]& \mathcal{Y}_u \ar[l]  \ar[d]
 & \mathcal{Z}_u \ar[l] \ar[d] & \mathcal{W}_u \ar[l] \ar[d]^{r} 
 \ar@/_6pt/[lll]_{\nu}
 & \mathcal{W}_{u}^{\sim} \ar[l] \ar[d]^{r^{\sim}}
 \ar@/_12pt/[llll]_{\nu^{\sim}} \\
 B_u & B_u \ar@{=}[l] & B^{\sim}_u \ar[l]  & B^{\sim}_u, \ar@{=}[l]
 & B^{\sim}_{u} \ar@{=}[l]
 }
$$
 where 
\begin{enumerate}
 \item $\mathcal{Y}_u \to \mathcal{X}_u$ is a resolution of
       indeterminacy of $\Phi$.
 \item $\mathcal{Z}_u \to \mathcal{Y}_u$ and
       $B^{\sim}_u \to B_u$ are bimeromorphic.
 \item $B^{\sim}_u$ is smooth and K\"ahler.
 \item $\mathcal{Z}_u \to B^{\sim}_u$ is flat.
 \item $\mathcal{W}_u \to \mathcal{Z}_u$ is the normalization.
 \item \( \mathcal{W}^{\sim}_{u} \to \mathcal{W}_{u}\) is a resolution
\end{enumerate}
 We denote by $\nu$, $ \nu^{\sim} $, $r$ and $ r^{\sim} $ the induced morphisms 
 $\mathcal{W}_u \to \mathcal{X}_u$, $ \mathcal{W}^{\sim}_{u} \to \mathcal{X}_{u} $,
 $ \mathcal{W}_{u} \to B_{u}^{\sim} $
 and $\mathcal{W}^{\sim}_u \to B^{\sim}_u$, respectively.
 We note that $ \nu $ and $ \nu^{\sim} $ are isomorphic on general fibres
 of $ \Phi $.
 The proof consists of three steps.

\noindent
{\bfseries Step 1. \quad}
 We prove that $B^{\sim}_u$ is projective. 
 Since $B^{\sim}_u$ is K\"ahler,
 it is enough to prove that $\dim H^0 (B^{\sim}_u, \Omega^2) = 0$.
 We derive a contradiction assuming that
 $\dim H^0 (B^{\sim}_u, \Omega^2) > 0$.
 Under this assumption, there exists a holomorphic $2$-form $\omega$
 on $B^{\sim}_u$. The pull back $(r^{\sim})^* \omega$ defines
 a degenerate holomorphic $2$-form on $\mathcal{W}^{\sim}_u$.
 On the other hand, 
 $H^0 (\mathcal{W}^{\sim}_u , \Omega^2) \cong H^0 (\mathcal{X}_u, \Omega^2)$
 because $\nu^{\sim}$ is birational and $\mathcal{X}_u$ and $ \mathcal{W}_{u} $ are smooth.
 Hence $\dim H^0 (\mathcal{W}^{\sim}_u, \Omega^2) = 1$
 and it should be generated by a generically nondegenerate holomorphic
 $2$-form. That is a contradiction.
 
\noindent
{\bfseries Step 2. \quad}
 Let $M$ be a very ample divisor on $B^{\sim}_u$. We prove that
 there exists a rational number $c$ such that
$$
\mathcal{L}_u \sim_{\mathbb{Q}}  c \nu_* r^*M.
$$
 It is enough to prove that
$$
 q_{\mathcal{X}_u}(\nu_* r^*M ) = 
 q_{\mathcal{X}_u}(\mathcal{L}_u) = 
 q_{\mathcal{X}_u}(\nu_*r^* M , \mathcal{L}_u) = 0.
$$
 Since $\mathcal{X}_u$ is non projective,
 $q_{\mathcal{X}_u}(\nu_* r^*M) \le 0$ and 
 $q_{\mathcal{X}_u}(\mathcal{L}_u ) \le 0$
 by \cite[Corollary 3.8]{basic_result}.
 On the other hand,  
 $q_{\mathcal{X}_u}(\mathcal{L}_u ) \ge 0$ 
 because $\mathcal{L}_u$ is nef.
 The linear system $|r^*M|$ contains
 members $M_1$ and $M_2$ such that the codimension of
 $M_1 \cap M_2$ is two.
 By the definition
$$ 
 q_{\mathcal{X}_u}(\nu_* r^*M) = \frac{\dim \mathcal{X}_u}{2}
 \int (\nu_* r^*M)^2 \sigma^{n-1}
 \bar{\sigma}^{n-1},
$$
 where $\sigma$ is a symplectic form on $\mathcal{X}_u$.
 Thus $q_{\mathcal{X}_u}(\nu_* r^*M) \ge 0$. Therefore 
 $q_{\mathcal{X}_u}(\nu_* r^*M ) = q_{\mathcal{X}_u}(\mathcal{L}_u) = 0$.
 Since $\nu_*r^*M$ is effective and $\mathcal{L}_u$ is nef,
 $q_{\mathcal{X}_u}(\nu_* r^*M , \mathcal{L}_u) \ge 0$.
 Again by  \cite[Corollary 3.8]{basic_result},
 $q_{\mathcal{X}_u}(\nu_*r^*M + \mathcal{L}_u) \le 0$.
 Thus $q_{\mathcal{X}_u}(\nu_* r^*M , \mathcal{L}_u ) = 0$ and we are done.

\noindent
{\bfseries Step 3. \quad}
 We prove that $\mathcal{L}_u$ is semi-ample.
 By \cite[Remark 2.11.1]{nakayama} and
 \cite[Theorem 5.5]{nakayama}, 
 it is enough to prove that there exists a nef and big divisor $M'$ on
 $B^{\sim}_u$ such that
 $$
 \nu^{*}\mathcal{L}_u \sim_{\mathbb{Q}} r^*M'
 $$
 By Step 2, $ |\mathcal{L}_{u}^{\otimes k}| \ne \emptyset $ for some positive integer
  $ k  $. Hence
 $
 \nu^*\mathcal{L}_u \sim_{\mathbb{Q}} r^*M + F
 $
 where  $F$ is a $ \mathbb{Q} $-effective Cartier divisor whose support
 is contained in the exceptional locus.
 For an irreducible component $ \Gamma $
 of a fibre of $r$, we have
 \[
  \nu^{*}\mathcal{L}_{u} |_{\Gamma} = F|_{\Gamma}.
 \]
 Thus the restriction 
 $ F|_{\Gamma} $ is nef for all $ \Gamma $. 
 Moreover $ r(F) \ne B_{u}^{\sim}$ because
 $ \nu $ is isomorphic on a general fibre of $ \Phi $. 
 By \cite[Lemma 2.15]{nakayama},
 there exists a $\mathbb{Q}$-effective divisor $M_0$ such that
$$
 F = r^* M_0.
$$
 If we put $M' := M + M_0$, we complete the proof of Lemma \ref{very_general}.

\end{proof}

\begin{lem}\label{very_general=>open}
 Let $\mathcal{X}_{\Delta}$, $\mathcal{L}_{\Delta}$, $\Delta$,
 $\mathcal{X}_u$ and $\mathcal{L}_u$
 be as in Proposition \ref{one_dimensional_cutting} 
 and the first part of its proof.
 If $\mathcal{L}_u$ is semi-ample a for very general point $u$
 of $\Delta$, then there exists an open neighbourhood $V$ of $o$ such that
 $\mathcal{L}_u$ is semi-ample
 for every point $u$ of $V$ except $o$.
\end{lem}
\begin{proof}
Let $ \varphi_{k} $ be the function on $ \Delta $ defined by
\[
 \varphi_{k}(t) := \dim 
 H^{0}(\mathcal{X}_{u}, \mathcal{L}^{\otimes k}_{\Delta}|_{\mathcal{X}_{u}}).
\]
 The function $ \varphi_{k} $ is upper semi continuous. Thus we have an open set
 of $ \Delta $ such that $ \varphi_{k} $ is constant. We denote by
  $\Delta (k)$ this open set. Then 
$$
 \mathcal{L}_{\Delta}^{\otimes k}\otimes k(u) \cong
 H^0 (\mathcal{X}_u , \mathcal{L}_{\Delta}^{\otimes k}|_{\mathcal{X}_u}).
$$
for $ u \in \Delta (k) $.
 By the assumption,
there exists a point $u_0$ of 
$\cap_{k=1}^{\infty}\Delta (k)$ such that 
 $\mathcal{L}_{u_0}$ is semiample. Hence
$$
 \pi_{\Delta}^* (\pi_{\Delta})_{*} \mathcal{L}_{\Delta}^{\otimes k}
 \to \mathcal{L}_{\Delta}^{\otimes k}
$$
 is surjective in a neighbourhood of  $\mathcal{X}_{u_0}$ for some $k$.
 This implies that the support $Z$ of the cokernel sheaf of
$\pi_{\Delta}^* (\pi_{\Delta})_{*} \mathcal{L}_{\Delta}^{\otimes k} 
 \to \mathcal{L}_{\Delta}^{\otimes k}$
 is a proper closed subset of $\mathcal{X}_{\Delta}$. 
 Since $\pi_{\Delta}$ is proper, $\pi (Z)$ is also a proper closed
  subset of $\Delta$.
 If we put $V = \Delta \setminus \pi (Z)$,
 we are done.
\end{proof}
 We complete the proof of Proposition \ref{one_dimensional_cutting}.
\end{proof}

\begin{proof}[Proof of Theorem \ref{main}.]
 We define the function $\varphi (t)$ on $ H $ as
$$
\varphi (t) :=
  \dim H^i (\mathcal{X}_t, \mathcal{L}_t)
$$
 where $\mathcal{X}_t$ is the fibre of $\pi$
 over $t$ and $\mathcal{L}_t$ is the restriction
 of $\mathcal{L}$ to $\mathcal{X}_t$.
 If $ \varphi (t) $ is constant on an open neighbourhood $ U $ of 
 the reference point,
  $ R^{i}\pi_{*}\mathcal{L} $ is locally free on $ U $.
 Let $ t $ be a very general point of $ H $. We choose 
  a small disk $ \Delta $ in $ H $ such that $ \Delta $ passes 
  $ o $ and $ t $.
 Since the Picard number of a 
 fibre $\mathcal{X}_t$ over a very general point
 of $H$ is one,
 the Picard number of a very general fibre
 of the induced morphism $\mathcal{X}\times_H \Delta \to \Delta$
 is one.
 By Proposition \ref{one_dimensional_cutting},
 $R^i (\pi_{\Delta})_* \mathcal{L}_{\Delta}$
 is locally free for every $i$. By the criteria
 of cohomological flatness \cite[page 134]{banica},
 if $ R^{i}(\pi_{\Delta})_{*}\mathcal{L}_{\Delta} $ is locally free and the morphism
 \begin{equation}\label{basechange}
   R^{i} (\pi_{\Delta})_{*}\mathcal{L}_{\Delta} \otimes k(t) \cong 
   H^{i}(\mathcal{X}_{t},\mathcal{L}_{\Delta}|_{\mathcal{X}_{t}})
 \end{equation}
 is isomorphic, 
 we have an isomorphism
 \[
 R^{i-1} (\pi_{\Delta})_{*}\mathcal{L}_{\Delta} \otimes k(t) \cong 
   H^{i-1}(\mathcal{X}_{t},\mathcal{L}_{\Delta}|_{\mathcal{X}_{t}})
 \]
 If $ i \ge \dim X $, the both hand sides of (\ref{basechange}) are zero. By a reverse
 induction and Proposition \ref{one_dimensional_cutting},
  we have that the morphisms (\ref{basechange}) are isomorphic for all $ i \ge 0 $.
 By Proposition \ref{one_dimensional_cutting} and the isomorphisms (\ref{basechange}), 
  $\varphi (t)$ is constant on $ \Delta $. 
 Let $ Z $ be a subset of $ H $ such that
 \[
  Z := \{ t \in H; \varphi (t) > \varphi (o),
  \}
 \]
 where $ o $ is the reference point. By the upper semicontinuous, $ Z $ is close.
 Since very general points of $ H $ are not contained in $ Z $, $ H\setminus Z $ is
 an open neighbourhood of $ o $.
 If we put $ U = H\setminus Z $, we are done.
\end{proof}

\begin{proof}[Proof of Corollary \ref{commentary}]
 The symbols $\mathcal{X}$, $H$, $\mathcal{L}$ 
 and $\pi$ denote the same objects in Corollary \ref{commentary}.
 It is enough to prove that the natural morphism
\begin{equation}\label{pi_free}
 \pi^* \pi_* \mathcal{L} \to \mathcal{L}
\end{equation} 
 is surjective in an open neighbourhood of the reference point. 
 Since $L$ is free, the restriction morphism
$$
 H^0 (X,L)\otimes L \to L
$$
 is surjective. 
 By Theorem \ref{main} and the argument in the proof of Theorem \ref{main},
\[
 \pi_{*}\mathcal{L}\otimes k(o) \to H^{0}(X,L)
\]
is isomorphic, where $ o $ is the reference point.
 This implies that the above morphism (\ref{pi_free})
 is surjective over $X$. 
 Since surjectivity is an open condition, we are done.
\end{proof}

\end{document}